\newcommand{\seq}{\subseteq}
\newcommand{\C}{\mathbb{C}}
\newtheorem{thm}{Theorem}[section]
\newtheorem*{thm-nl}{Theorem}
\newtheorem*{prop-nl}{Proposition}
\newtheorem{lem}[thm]{Lemma}
\def\PP{{\mathbf P}}
\def\Pic0{{\rm Pic}^0(X)}
\newtheorem*{cor-nl}{Corollary}
\newtheorem*{conjecture-nl}{Conjecture}
\newtheorem*{quest-nl}{Question}
\newtheorem*{quests-nl}{Questions}
\newtheorem{prop}[thm]{Proposition}
\theoremstyle{remark}
\newtheorem*{rem}{Remark}
\title{{A Simple Proof of Voisin's Theorem for Canonical Curves of Even Genus}}
\date{\today}
\author[M. Kemeny]{Michael Kemeny}
\address{University of Wisconsin-Madison, Department of Mathematics, 480 Lincoln Dr
\hfill \newline\texttt{}
 \indent WI 53706, USA} \email{{\tt michael.kemeny@gmail.com}}
\begin{document}
\begin{abstract}
We give a simple proof of Voisin's Theorem for general canonical curves of even genus. This completely determines the terms of the minimal free resolution of the coordinate ring of such curves.
\end{abstract}
\maketitle
\setcounter{section}{-1}
\section{Introduction}
The aim of this paper is to give a simple proof of a theorem of Voisin \cite{V1} on the equations of general canonical curves of even genus. Recall that the classical Theorem of Noether--Babbage--Petri states that canonical curves are projectively normal, and that the ideal $I_{C/\PP^{g-1}}$ is generated by quadrics (with a few, well-understood, exceptions), see \cite{arbarello-sernesi-petri} for a modern treatment. In the 1980s, M.\ Green realized that these classical results about the equations defining canonical curves should be the first case of a much more general statement about higher syzygies, and he made a very influential conjecture \cite{green-koszul} in this direction.\medskip

Whilst the general case of Green's Conjecture remains open, in 2002 Voisin made a breakthrough by proving the conjecture for \emph{general} curves of even genus \cite{V1}.  Voisin's argument relies on an intricate study of the geometry of Hilbert schemes on a K3 surface. Recently, an alternative proof of Voisin's Theorem was given in \cite{AFPRW}. In this proof, the authors first degenerate the K3 surface to the tangent developable. The proof then relies on a sophisticated use of the representation theory of the $SL_2$ action present in this special situation. In particular, various explicit plethysm formulae play a key role. \medskip

In this paper, we give a simple proof of Voisin's Theorem, using only basic homological algebra. We achieve this by a direct computation using K3 surfaces. Let $X$ be a K3 surface over $\C$ with Picard group generated by an ample line bundle $L$ of even genus $g=2k$, i.e.\ $(L)^2=2g-2$. Define $K_{p,q}(X,L)$ as the middle cohomology of 
$$\bigwedge^{p+1} \rm{H}^0(X,L) \otimes \rm{H}^0(X,L^{\otimes q-1}) \to \bigwedge^p \rm{H}^0(X,L) \otimes \rm{H}^0(X,L^{\otimes q}) \to \bigwedge^{p-1} \rm{H}^0(X,L) \otimes \rm{H}^0(X,L^{\otimes q+1}) $$\\
 Voisin's Theorem states that $K_{k,1}(X,L)=0$, \cite{V1}. This single vanishing suffices to prove Green's Conjecture for general canonical curves in even genus. \medskip


Our proof is short and direct. Let $E$ be the rank two \emph{Lazarsfeld--Mukai bundle} associated to a $g^1_{k+1}$ on a smooth curve $C \in |L|$, see \cite{lazarsfeld-BNP}. The dual bundle $E^{\vee}$ fits into the exact sequence
$$0 \to E^{\vee} \to \rm{H}^0(C,A) \otimes \mathcal{O}_X \to i_*A \to 0,$$
for $A$ a $g^1_{k+1}$ on $C$, where $i: C \hookrightarrow X$ is the inclusion. The vector bundle $E$ has invariants $\det(E)=L$, $h^0(E)=k+2$, $h^1(E)=h^2(E)=0$.\medskip

We deduce Voisin's Theorem from the K\"unneth formula on $X \times \PP(\rm{H}^0(E))$. Our proof quickly reduces to showing that a certain square matrix is nonsingular. However, our matrix takes the form $\rm{H}^i(\rm{Sym}^j \mathcal{F}) \to \rm{H}^i(\rm{Sym}^{j-1} \mathcal{F} \otimes \mathcal{F})$, for some vector bundle $\mathcal{F}$, so that the desired nonsingularity is \emph{automatic}, see Proposition \ref{auto-injectivity}.\medskip

The starting point for our new approach is based on a variation of the secant construction from \cite[\S 3]{ein-lazarsfeld-asymptotic}. With the same method (but using a different exterior power), we proved in \cite{geometric-syzygy} the Geometric Syzygy Conjecture, a higher syzygy version of Green's Theorem on Quadrics, \cite{green-quadrics}, in even genus. We further expect that our approach will generalize well to the study of syzygies of higher dimensional varieties. \medskip

Green's Conjecture applied to general curves is particularly strong in that it completely determines the terms in the resolution of the canonical ring $\Gamma_C(\omega_C):=\bigoplus_q \rm{H}^0(\omega_C^{\otimes q})$, see \cite[\S 4.1]{farkas-progress}. Note that, for a general curve, Green's Conjecture naturally breaks down into even and odd genus cases.  A few years after her breakthrough for even genus curves, Voisin deduced the odd genus case of Green's conjecture out of the even genus case \cite{V2}. We hope to port our method to this setting, but have not yet achieved this. It is very likely that our proof works in characteristic $p$ for large $p$, but we have not checked this.\medskip

 The reader may notice a resemblance between our argument and \cite{V2}, Proof of Proposition 8. The idea of considering $\text{Sym}^{k+1} \mathcal{S}$ was foreshadowed in \cite{AFPRW}. Other contributions to this topic include \cite{aprodu-farkas}, which relies on Voisin's results, and \cite{raicu-sam}, which relies instead upon the results of \cite{AFPRW}.

\subsection{Preliminaries}
We gather here a few facts. Let $0 \to F_1 \to F_2 \to F_3 \to 0$ be a short exact sequence of vector bundles over $\C$. From \cite{weyman-sym-ext}, for any $i$ we have exact sequences
\begin{align*}
&\ldots \to \bigwedge^{i-2} F_2 \otimes \rm{Sym}^2(F_1) \to \bigwedge^{i-1} F_2 \otimes F_1  \to \bigwedge^i F_2 \to \bigwedge^i F_3 \to 0, \\
&0 \to \rm{Sym}^i(F_1) \to \rm{Sym}^{i}(F_2) \to \rm{Sym}^{i-1}(F_2) \otimes F_3 \to \rm{Sym}^{i-2}(F_2) \otimes \bigwedge^2 F_3 \to \ldots
\end{align*}

We state two formulae which we will often use without specific mention. Let $f: X \to Y$ be a morphism of varieties and $\mathcal{F} \in \rm{Coh}(X)$ a sheaf. If $\mathcal{E}$ is a vector bundle on $Y$ then we have the \emph{Projection Formula} $\rm{R}^if_*(\mathcal{F} \otimes f^* \mathcal{E}) \simeq \rm{R}^if_*\mathcal{F} \otimes \mathcal{E}$, \cite[III, Ex.\ 8.3]{hartshorne}. In particular, $f_*(\mathcal{F} \otimes f^* \mathcal{E}) \simeq f_*\mathcal{F} \otimes \mathcal{E}$. If $\rm{R}^if_*\mathcal{F}=0$ for all $ i>0$ then $\rm{H}^p(X,\mathcal{F}) \simeq \rm{H}^p(Y, f_* \mathcal{F})$ for $p \geq 0$, \cite[III, Ex.\ 8.1 ]{hartshorne}.\medskip

 If $X, Y$ are varieties and $\mathcal{F} \in \rm{Coh}(X), \mathcal{G} \in \rm{Coh}(Y)$ are sheaves, the \emph{K\"unneth formula} states
 $$\rm{H}^{m}(X \times Y, \mathcal{F} \boxtimes \mathcal{G}) \simeq \bigoplus_{a+b=m} \rm{H}^a(X,\mathcal{F}) \otimes \rm{H}^b(Y, \mathcal{G}),$$
where $\mathcal{F} \boxtimes \mathcal{G}:=p^*\mathcal{F} \otimes q^* \mathcal{G}$, for projections $p: X \times Y \to X$, $q: X \times Y \to Y$.\medskip
 
 Assume we have an exact sequence $0 \to \mathcal{F} \to \mathcal{G} \to \mathcal{H} \to 0$ of coherent sheaves on a smooth variety, with $\mathcal{G}$ locally free. Assume either $\mathcal{H}$ is locally free or $\mathcal{H} \simeq \mathcal{O}_D$ for a divisor $D$. Then $\mathcal{F}$ is locally free. This follows from \cite[III, Ex 6.5]{hartshorne}. 

\section{The proof}
Consider the unique rank two, Lazarsfeld--Mukai, bundle $E$ on $X$ as in the introduction. For general $s \in \rm{H}^0(E)$, the zero-locus $Z(s)$ corresponds to a $g^1_{k+1}$ on a smooth $C \in |L|$. For \emph{any} $s \in \rm{H}^0(E)$, $Z(s) \seq X$ is zero-dimensional and we have an exact sequence $$0 \to \mathcal{O}_X \xrightarrow{s} E \xrightarrow{\wedge s} I_{Z(s)} \otimes L \to 0.$$
Set $\PP:=\PP(\rm{H}^0(E))\simeq \PP^{k+1}$. Consider $X \times \PP$ with projections $p: X \times \PP \to X$, $q: X \times \PP \to \PP$. Define $\mathcal{Z} \seq X \times \PP$ as the locus $\left\{ (x,s) \; | \; s(x)=0 \right\}$. Since $E$ is globally generated, $\mathcal{Z}$ is a projective bundle over $X$ and hence smooth. We have an exact sequence
$$0 \to \mathcal{O}_X \boxtimes \mathcal{O}_{\PP}(-2) \xrightarrow{\rm{id}} E \boxtimes \mathcal{O}_{\PP}(-1) \to p^*L \otimes I_{\mathcal{Z}} \to 0,$$
where the first nonzero map is given by multiplication by $$\rm{id} \in \rm{H}^0\left(E \boxtimes \mathcal{O}_{\PP}(1)\right) \simeq \rm{H}^0(E) \otimes \rm{H}^0(E)^{\vee} \simeq \rm{Hom}\left(\rm{H}^0(E), \rm{H}^0(E)\right).$$ 
Note that $\mathcal{Z} \to \PP$ is finite and flat, \cite[Prop.\ 6.1.5]{EGA}.

\begin{rem}
As soon as there exists a nontrivial, effective divisor $C$ on $X$ with $\rm{H}^0(E(-C))$ nonzero, then $\mathcal{Z} \to \PP$ cannot be finite and flat. For this reason, it is essential that $\rm{Pic}(X) \simeq \mathbb{Z}[L]$.
\end{rem}
\medskip

Let $\mathcal{M}:=p^* M_L$, where $M_L$ is the Kernel Bundle $0 \to M_L \to \rm{H}^0(L)\otimes \mathcal{O}_X \to L \to 0.$
By the well-known Kernel Bundle description of Koszul cohomology \cite[\S 3]{ein-lazarsfeld-asymptotic}, it suffices to show $$\rm{H}^1(X, \bigwedge^{k+1} M_L )=0.$$
Note $\rm{H}^1(X \times \PP,\bigwedge^{k+1}\mathcal{M}) \simeq \rm{H}^1(X, \bigwedge^{k+1}M_L) $ by the K\"unneth formula, as $\rm{H}^1(\mathcal{O}_X)=0$.\\

We now adapt \cite[p.\ 615]{ein-lazarsfeld-asymptotic}. Let $\pi:B \to  X \times \PP $ be the blow-up along $\mathcal{Z}$ with exceptional divisor $D$. Then 
$\pi_*\mathcal{O}_B \simeq \mathcal{O}_{X \times \PP}, \; \; \pi_* I_{D} \simeq I_{\mathcal{Z}} \; \; \text{and} \; \; \rm{R}^i \pi_*\mathcal{O}_B=\rm{R}^i\pi_* I_D=0 \; \text{for $i>0$},$
cf.\ \cite[V, Prop.\ 3.4 and Ex.\ 3.1]{hartshorne}. Set $p':=p \circ \pi$, $q':=q \circ \pi$. We have canonical identifications
$$q'_*({p'}^*L \otimes I_D) \simeq q_*(p^*L \otimes I_{\mathcal{Z}}), \; \; \; \; \; \; \; q'_*{p'}^*L  \simeq q_* p^*L.$$
\smallskip

Consider 
$$\mathcal{W}:=\text{Coker}\left(q'_*({p'}^*L \otimes I_D) \to q'_*{p'}^*L\right)\simeq  \text{Coker}\left(q_*({p}^*L \otimes I_{\mathcal{Z}}) \to q_*{p}^*L\right).$$
\begin{lem}
The sheaf $\mathcal{W}$ is locally free of rank $k$.
\end{lem}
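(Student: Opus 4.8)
The plan is to exhibit $\mathcal{W}$ as the kernel of a surjection between two locally free sheaves on $\PP$ — which forces $\mathcal{W}$ to be locally free — and then to read off its rank. I work with the description $\mathcal{W} \simeq \mathrm{Coker}(q_*(p^*L \otimes I_{\mathcal{Z}}) \to q_*(p^*L))$. Push the restriction sequence $0 \to p^*L \otimes I_{\mathcal{Z}} \to p^*L \to p^*L \otimes \mathcal{O}_{\mathcal{Z}} \to 0$ forward along $q$: by left exactness of $q_*$ the map $q_*(p^*L \otimes I_{\mathcal{Z}}) \to q_*(p^*L)$ defining $\mathcal{W}$ is injective, so $\mathcal{W}$ is the image of $q_*(p^*L) \to q_*(p^*L \otimes \mathcal{O}_{\mathcal{Z}})$. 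Since $\mathrm{R}^1 q_* p^*L \simeq \mathrm{H}^1(X,L) \otimes \mathcal{O}_{\PP} = 0$ (because $L$ is ample on the K3 surface $X$), the long exact sequence of higher direct images shows $\mathcal{W}$ sits in
$$0 \to \mathcal{W} \to q_*(p^*L \otimes \mathcal{O}_{\mathcal{Z}}) \to \mathrm{R}^1 q_*(p^*L \otimes I_{\mathcal{Z}}) \to 0.$$
It then remains to check that the two outer terms here are locally free, of ranks $k+1$ and $1$ respectively; once that is done, $\mathcal{W}$ is locally free by \cite[III, Ex 6.5]{hartshorne}, necessarily of rank $k$.

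For the middle term: $q_*(p^*L \otimes \mathcal{O}_{\mathcal{Z}})$ is the direct image under the finite flat morphism $\mathcal{Z} \to \PP$ (recorded above via \cite[Prop.\ 6.1.5]{EGA}) of the line bundle $p^*L|_{\mathcal{Z}}$, and the pushforward of a line bundle under a finite flat morphism is locally free of rank equal to the degree of that morphism. The degree of $\mathcal{Z} \to \PP$ equals the length of a general fibre $Z(s)$, which is $k+1$ since $Z(s)$ is a $g^1_{k+1}$ on a smooth $C \in |L|$. Hence $q_*(p^*L \otimes \mathcal{O}_{\mathcal{Z}})$ is locally free of rank $k+1$.

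For the right-hand term: apply $q_*$ to the resolution $0 \to \mathcal{O}_X \boxtimes \mathcal{O}_{\PP}(-2) \to E \boxtimes \mathcal{O}_{\PP}(-1) \to p^*L \otimes I_{\mathcal{Z}} \to 0$. Using $\mathrm{H}^1(X,E) = \mathrm{H}^2(X,E) = 0$ together with $\mathrm{H}^1(X,\mathcal{O}_X) = 0$, the resulting long exact sequence collapses: on the one hand it presents $q_*(p^*L \otimes I_{\mathcal{Z}}) \simeq \mathrm{Coker}(\mathcal{O}_{\PP}(-2) \hookrightarrow \mathrm{H}^0(X,E) \otimes \mathcal{O}_{\PP}(-1))$, locally free of rank $k+1$, and on the other hand
$$\mathrm{R}^1 q_*(p^*L \otimes I_{\mathcal{Z}}) \simeq \mathrm{R}^2 q_*(\mathcal{O}_X \boxtimes \mathcal{O}_{\PP}(-2)) \simeq \mathrm{H}^2(X,\mathcal{O}_X) \otimes \mathcal{O}_{\PP}(-2) \simeq \mathcal{O}_{\PP}(-2),$$
where I used $\mathrm{H}^2(X,\mathcal{O}_X) \simeq \C$ on a K3 surface. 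In particular this is locally free of rank $1$, so $\mathcal{W}$ is locally free with $\mathrm{rank}\,\mathcal{W} = (k+1) - 1 = k$, which agrees with the count $h^0(X,L) - (k+1) = (2k+1) - (k+1) = k$ coming from the first description of $\mathcal{W}$.

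I do not foresee any real obstacle. The only input that is not pure homological bookkeeping is the finite-flatness of $\mathcal{Z} \to \PP$ — which genuinely uses $\mathrm{Pic}(X) \simeq \ZZ[L]$, cf.\ the Remark above — together with the determination that its degree equals $k+1$; everything else is keeping track of which cohomology groups of $E$ and $\mathcal{O}_X$ vanish so that the pushforward long exact sequences degenerate as claimed.
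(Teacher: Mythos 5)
Your proof is correct and follows the same skeleton as the paper's: both present $\mathcal{W}$ via the pushforward sequence $0 \to \mathcal{W} \to q_*(p^*L\otimes\mathcal{O}_{\mathcal{Z}}) \to \mathrm{R}^1q_*(p^*L\otimes I_{\mathcal{Z}}) \to 0$ (valid since $\mathrm{H}^1(X,L)=0$), and then check that the two outer terms are locally free of ranks $k+1$ and $1$, so that $\mathcal{W}$ is a kernel of a surjection of vector bundles. The only divergence is in how local freeness is verified: the paper applies Grauert's theorem, using that $h^0(L|_{Z(s)})=k+1$ and $h^1(X,L\otimes I_{Z(s)})=h^2(\mathcal{O}_X)=1$ for \emph{every} $s$, whereas you use the finite flat morphism $\mathcal{Z}\to\PP$ of degree $k+1$ for the middle term and push forward the resolution $0\to\mathcal{O}_X\boxtimes\mathcal{O}_{\PP}(-2)\to E\boxtimes\mathcal{O}_{\PP}(-1)\to p^*L\otimes I_{\mathcal{Z}}\to 0$ for the right-hand term; this is equally valid and in fact yields the sharper identification $\mathrm{R}^1q_*(p^*L\otimes I_{\mathcal{Z}})\simeq\mathcal{O}_{\PP}(-2)$ rather than mere local freeness. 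One cosmetic remark: your parenthetical claim that $\mathrm{Coker}\bigl(\mathcal{O}_{\PP}(-2)\to\mathrm{H}^0(E)\otimes\mathcal{O}_{\PP}(-1)\bigr)$ is locally free requires the map to be fibrewise injective (it is, being the twisted tautological inclusion), but that assertion is only used for your rank sanity check and is not needed for the lemma itself.
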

\begin{proof}
Applying $\rm{R}q_*$ we have the exact sequence $0 \to \mathcal{W} \to q_*(L_{|_{\mathcal{Z}}}) \to \rm{R}^1q_*(p^*L \otimes I_{\mathcal{Z}})\to 0$. For any $s \in \rm{H}^0(E)$, we have $\rm{H}^1(X,L\otimes I_{Z(s)}) \simeq \rm{H}^2(\mathcal{O}_X)\simeq \C$. Thus $q_*(L_{|_{\mathcal{Z}}}) $ and $\rm{R}^1q_*(p^*L \otimes I_{\mathcal{Z}})$ are locally free of ranks $k+1$ and $1$, respectively, by Grauert's Theorem \cite[III, \S 12]{hartshorne}. The claim follows.
\end{proof}
\smallskip

Since $D$ is a divisor, we have a rank $k$ vector bundle 
$\displaystyle{\Gamma:=\text{Ker}\left({q'}^*\mathcal{W} \twoheadrightarrow {p'}^*L_{|_D} \right).}$
Let $\mathcal{S}$ be the vector bundle on $B$ defined by the exact sequence
$$0 \to \mathcal{S} \to {q'}^* q'_* ({p'}^*L \otimes I_D) \to {p'}^*L \otimes I_D \to 0.$$
We have an exact sequence $0 \to \mathcal{S} \to \pi^* \mathcal{M} \to \Gamma \to 0,$ giving the exact sequence
$$ \ldots \to \bigwedge^{k-1} \pi^* \mathcal{M} \otimes \rm{Sym}^2 \mathcal{S} \to  \bigwedge^{k} \pi^* \mathcal{M} \otimes \mathcal{S} \to \bigwedge^{k+1}\pi^* \mathcal{M} \to 0.$$
\begin{rem}
The Secant Sheaves defined in \cite[\S 3]{ein-lazarsfeld-asymptotic} are only torsion-free in general. To apply \cite{weyman-sym-ext}, we need $\Gamma$ to be locally free and hence we must pass to the blow-up $B$.
\end{rem}
 
 To prove Voisin's Theorem it suffices to show $$\rm{H}^i(B, \bigwedge^{k+1-i} \pi^* \mathcal{M} \otimes \rm{Sym}^i \mathcal{S})=0 \; \; \text{for $1 \leq i \leq k+1$}.$$
One readily shows these vanishings for $i <k+1$ (see Theorem \ref{main-thm}). The crucial point is to show $\rm{H}^{k+1}(B, \rm{Sym}^{k+1} \mathcal{S})=0$. To ease the notation, we set $$\mathcal{G}:=q^*q_*(p^*L \otimes I_{\mathcal{Z}}).$$
\begin{lem} \label{first-iso}
We have natural isomorphisms
\begin{align*}
\rm{H}^k(X \times \PP,\rm{Sym}^{k+1} \mathcal{G})&\simeq \rm{Sym}^k \rm{H}^0(E)\\
\rm{H}^{k+1}(X \times \PP,\rm{Sym}^{k+1} \mathcal{G})&=0
\end{align*}
\end{lem}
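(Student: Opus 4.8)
The plan is to compute $\mathcal{G}$ by pushing forward the structural sequence on $X \times \PP$, then to reduce the cohomology on $X \times \PP$ to cohomology on $\PP$ via the K\"unneth formula, and finally to conclude with Bott vanishing on $\PP^{k+1}$. Set $\mathcal{N} := q_*(p^*L \otimes I_{\mathcal{Z}})$, so $\mathcal{G} = q^*\mathcal{N}$. Applying $q_*$ to
$$0 \to \mathcal{O}_X \boxtimes \mathcal{O}_{\PP}(-2) \to E \boxtimes \mathcal{O}_{\PP}(-1) \to p^*L \otimes I_{\mathcal{Z}} \to 0$$
and using $\rm{H}^0(X,\mathcal{O}_X) = \C$, $\rm{H}^1(X,\mathcal{O}_X) = 0$ together with the projection formula, the pushforwards of the first two terms are $\mathcal{O}_{\PP}(-2)$ and $\rm{H}^0(E) \otimes \mathcal{O}_{\PP}(-1)$, and the sequence remains exact:
$$0 \to \mathcal{O}_{\PP}(-2) \to \rm{H}^0(E) \otimes \mathcal{O}_{\PP}(-1) \to \mathcal{N} \to 0.$$
Twisting by $\mathcal{O}_{\PP}(2)$, the first map becomes the Euler map for $\PP = \PP(\rm{H}^0(E)) \simeq \PP^{k+1}$, so in fact $\mathcal{N} \simeq T_{\PP}(-2)$; however, only the displayed presentation of $\mathcal{N}$ will be used.

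Next, since $\rm{Sym}^{k+1}$ commutes with pullback, $\rm{Sym}^{k+1}\mathcal{G} = q^*\rm{Sym}^{k+1}\mathcal{N} = \mathcal{O}_X \boxtimes \rm{Sym}^{k+1}\mathcal{N}$, and the K\"unneth formula together with $\rm{H}^0(X,\mathcal{O}_X) = \rm{H}^2(X,\mathcal{O}_X) = \C$, $\rm{H}^1(X,\mathcal{O}_X) = 0$ yields
$$\rm{H}^m(X \times \PP, \rm{Sym}^{k+1}\mathcal{G}) \simeq \rm{H}^m(\PP, \rm{Sym}^{k+1}\mathcal{N}) \oplus \rm{H}^{m-2}(\PP, \rm{Sym}^{k+1}\mathcal{N}).$$
Hence it is enough to prove that $\rm{H}^i(\PP, \rm{Sym}^{k+1}\mathcal{N})$ equals $\rm{Sym}^k \rm{H}^0(E)$ for $i = k$ and vanishes for all $i \neq k$: taking $m = k$ then gives $\rm{H}^k(X \times \PP, \rm{Sym}^{k+1}\mathcal{G}) \simeq \rm{Sym}^k \rm{H}^0(E)$ (the $\rm{H}^{k-2}$ term vanishing), and $m = k+1$ gives $\rm{H}^{k+1}(X \times \PP, \rm{Sym}^{k+1}\mathcal{G}) = 0$.

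For this last vanishing statement, apply $\rm{Sym}^{k+1}$ to the presentation of $\mathcal{N}$. As $\mathcal{O}_{\PP}(-2)$ is a line subbundle, this gives the short exact sequence
$$0 \to \rm{Sym}^k \rm{H}^0(E) \otimes \mathcal{O}_{\PP}(-k-2) \to \rm{Sym}^{k+1} \rm{H}^0(E) \otimes \mathcal{O}_{\PP}(-k-1) \to \rm{Sym}^{k+1}\mathcal{N} \to 0.$$
On $\PP^{k+1}$ the line bundle $\mathcal{O}_{\PP}(-k-1)$ is acyclic, while $\mathcal{O}_{\PP}(-k-2)$ has one-dimensional cohomology, concentrated in degree $k+1$ (by Bott's formula, or Serre duality). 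The associated long exact sequence then gives $\rm{H}^i(\PP, \rm{Sym}^{k+1}\mathcal{N}) \simeq \rm{H}^{i+1}(\PP, \rm{Sym}^k \rm{H}^0(E) \otimes \mathcal{O}_{\PP}(-k-2))$, which is $\rm{Sym}^k \rm{H}^0(E)$ for $i = k$ and $0$ otherwise, as desired. The only step requiring real care is the first one: one must check that the higher direct images along $q$ vanish so that the pushforward of the structural sequence stays short exact; everything after the identification of $\mathcal{N}$ is Bott vanishing on $\PP^{k+1}$ combined with $\rm{H}^\bullet(X,\mathcal{O}_X) = \C \oplus 0 \oplus \C$.
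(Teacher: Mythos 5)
Your proposal is correct and is essentially the paper's own argument: both rest on the same short exact sequence obtained by taking $\mathrm{Sym}^{k+1}$ of the presentation $0 \to \mathcal{O}(-2) \to \mathrm{H}^0(E)\otimes\mathcal{O}(-1) \to \cdot \to 0$ with the line subbundle split off, the acyclicity of $\mathcal{O}_{\PP^{k+1}}(-k-1)$, the one-dimensional top cohomology of $\mathcal{O}_{\PP^{k+1}}(-k-2)$, and the K\"unneth formula with $\mathrm{H}^{\bullet}(X,\mathcal{O}_X)$. The only difference is organizational: you push the sequence down to $\PP$ and compute there before applying K\"unneth once at the end, while the paper runs the identical computation directly on $X \times \PP$.
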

\begin{proof}
The exact sequence $0 \to q^*\mathcal{O}(-2) \to q^*q_*p^*E \otimes q^*\mathcal{O}(-1) \to \mathcal{G}\to 0$
 gives the exact sequence
$$0 \to \rm{Sym}^k q^*q_*p^*E \otimes q^*\mathcal{O}(-k-2) \to \rm{Sym}^{k+1}q^*q_*p^*E \otimes q^*\mathcal{O}(-k-1) \to \rm{Sym}^{k+1}\mathcal{G} \to 0.$$
Since $q_*p^*E \simeq \rm{H}^0(E) \otimes \mathcal{O}_{\PP}$ is trivial,
\begin{align*}
\rm{H}^k(\rm{Sym}^{k+1}\mathcal{G}) \simeq \rm{H}^{k+1}(\rm{Sym}^k q^*q_*p^*E \otimes q^*\mathcal{O}(-k-2))\simeq \rm{Sym}^k \rm{H}^0(E)
\end{align*}
The vanishing $\rm{H}^{k+1}(\rm{Sym}^{k+1} \mathcal{G})=0$ follows from $$\rm{H}^{k+1}(\mathcal{O}_X \boxtimes \mathcal{O}_{\PP}(-k-1))=\rm{H}^{k+2}(\mathcal{O}_X \boxtimes \mathcal{O}_{\PP}(-k-2))=0,$$
using $\rm{H}^1(\mathcal{O}_X)=0$.
\end{proof}

The next lemma is a similar computation to the previous one.
\begin{lem} \label{first-twist-lem}
We have a natural isomorphism
$\rm{H}^k(\rm{Sym}^{k} \mathcal{G} \otimes p^*L \otimes I_{\mathcal{Z}}) \simeq \rm{Sym}^k \rm{H}^0(E).$

\end{lem}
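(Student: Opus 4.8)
The plan is to run the argument of Lemma \ref{first-iso} with $\rm{Sym}^{k}$ in place of $\rm{Sym}^{k+1}$, twisted throughout by $p^*L\otimes I_{\mathcal{Z}}$. Applying $\rm{Sym}^{k}$ to $0\to q^*\mathcal{O}(-2)\to q^*q_*p^*E\otimes q^*\mathcal{O}(-1)\to\mathcal{G}\to 0$ gives, exactly as there,
$$0\to\rm{Sym}^{k-1}q^*q_*p^*E\otimes q^*\mathcal{O}(-k-1)\to\rm{Sym}^{k}q^*q_*p^*E\otimes q^*\mathcal{O}(-k)\to\rm{Sym}^{k}\mathcal{G}\to 0.$$
Here $\mathcal{G}=q^*q_*(p^*L\otimes I_{\mathcal{Z}})$ is locally free: $h^0(X,L\otimes I_{Z(s)})$ is independent of $s$ by Riemann--Roch, since $\rm{H}^1(X,L\otimes I_{Z(s)})\simeq\rm{H}^2(\mathcal{O}_X)\simeq\C$, so $q_*(p^*L\otimes I_{\mathcal{Z}})$ is locally free by Grauert. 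Thus tensoring the displayed sequence by $p^*L\otimes I_{\mathcal{Z}}$ preserves exactness, and — using $q^*q_*p^*E\simeq\rm{H}^0(E)\otimes\mathcal{O}$ — the task reduces to computing $\rm{H}^{k}$ and $\rm{H}^{k+1}$ of $p^*L\otimes I_{\mathcal{Z}}\otimes q^*\mathcal{O}(-k)$ and of $p^*L\otimes I_{\mathcal{Z}}\otimes q^*\mathcal{O}(-k-1)$, and reading off the associated long exact cohomology sequence.

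For both I would twist $0\to\mathcal{O}_X\boxtimes\mathcal{O}_{\PP}(-2)\to E\boxtimes\mathcal{O}_{\PP}(-1)\to p^*L\otimes I_{\mathcal{Z}}\to 0$ appropriately and apply K\"unneth, together with $\rm{H}^1(\mathcal{O}_X)=0$, $\rm{H}^0(\mathcal{O}_X)\simeq\rm{H}^2(\mathcal{O}_X)\simeq\C$, and $\rm{H}^i(X,E)=0$ for $i>0$. Since $\mathcal{O}_{\PP}(-k-1)$ has no cohomology on $\PP\simeq\PP^{k+1}$, the $q^*\mathcal{O}(-k)$ twist gives $\rm{H}^{k}(p^*L\otimes I_{\mathcal{Z}}\otimes q^*\mathcal{O}(-k))\simeq\rm{H}^{k+1}(\mathcal{O}_X\boxtimes\mathcal{O}_{\PP}(-k-2))\simeq\C$, precisely as in Lemma \ref{first-iso}. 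For the $q^*\mathcal{O}(-k-1)$ twist, $\mathcal{O}_X\boxtimes\mathcal{O}_{\PP}(-k-3)$ and $E\boxtimes\mathcal{O}_{\PP}(-k-2)$ have nonzero cohomology only in degree $k+1$ (and, for the former, also degree $k+3$, which is irrelevant here), so $\rm{H}^{k}$ and $\rm{H}^{k+1}$ of $p^*L\otimes I_{\mathcal{Z}}\otimes q^*\mathcal{O}(-k-1)$ are respectively the kernel and cokernel of a single map
$$\psi\colon\rm{H}^{k+1}(\mathcal{O}_X\boxtimes\mathcal{O}_{\PP}(-k-3))\longrightarrow\rm{H}^{k+1}(E\boxtimes\mathcal{O}_{\PP}(-k-2))$$
of $(k+2)$-dimensional spaces, and the lemma comes down to showing $\psi$ is an isomorphism.

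This is the one delicate point. The sheaf map $\mathcal{O}_X\boxtimes\mathcal{O}_{\PP}(-k-3)\to E\boxtimes\mathcal{O}_{\PP}(-k-2)$ is the twist by $\mathcal{O}_X\boxtimes\mathcal{O}_{\PP}(-k-1)$ of multiplication by $\rm{id}\in\rm{H}^0(E\boxtimes\mathcal{O}_{\PP}(1))\simeq\rm{H}^0(E)\otimes\rm{H}^0(E)^{\vee}$; via K\"unneth $\psi$ is therefore cup product with $\rm{id}$, sending $\alpha\in\rm{H}^{k+1}(\mathcal{O}_{\PP^{k+1}}(-k-3))$ to $\sum_i(f_i\cup\alpha)\,e_i$ when $\rm{id}=\sum_i e_i\otimes f_i$. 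On $\PP^{k+1}$ the cup product $\rm{H}^0(\mathcal{O}(1))\otimes\rm{H}^{k+1}(\mathcal{O}(-k-3))\to\rm{H}^{k+1}(\mathcal{O}(-k-2))\simeq\C$ is the perfect Serre-duality pairing, identifying $\rm{H}^{k+1}(\mathcal{O}(-k-3))$ with $\rm{H}^0(\mathcal{O}(1))^{\vee}\simeq\rm{H}^0(E)$; under this identification $\psi$ is the identity of $\rm{H}^0(E)$, hence an isomorphism. Consequently $\rm{H}^{k}(p^*L\otimes I_{\mathcal{Z}}\otimes q^*\mathcal{O}(-k-1))=\rm{H}^{k+1}(p^*L\otimes I_{\mathcal{Z}}\otimes q^*\mathcal{O}(-k-1))=0$, so both outer terms $\rm{H}^{k}$ and $\rm{H}^{k+1}$ of $\rm{Sym}^{k-1}\rm{H}^0(E)\otimes\bigl(p^*L\otimes I_{\mathcal{Z}}\otimes q^*\mathcal{O}(-k-1)\bigr)$ vanish, and the long exact sequence collapses to
$$\rm{H}^{k}(\rm{Sym}^{k}\mathcal{G}\otimes p^*L\otimes I_{\mathcal{Z}})\simeq\rm{Sym}^{k}\rm{H}^0(E)\otimes\rm{H}^{k}(p^*L\otimes I_{\mathcal{Z}}\otimes q^*\mathcal{O}(-k))\simeq\rm{Sym}^{k}\rm{H}^0(E).$$
I expect the verification that $\psi$ is an isomorphism to be the main obstacle; the remaining steps are as routine as in Lemma \ref{first-iso}.
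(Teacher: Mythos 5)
Your proposal is correct and follows essentially the same route as the paper: twist the symmetric-power sequence by $p^*L\otimes I_{\mathcal{Z}}$, kill the $\mathcal{O}_{\PP}(-k-1)$-twisted term by identifying the connecting map $\rm{H}^{k+1}(\mathcal{O}_X\boxtimes\mathcal{O}_{\PP}(-k-3))\to\rm{H}^{k+1}(E\boxtimes\mathcal{O}_{\PP}(-k-2))$ with the identity of $\rm{H}^0(E)$, and compute the $\mathcal{O}_{\PP}(-k)$-twisted term to be $\C$ via the Koszul sequence, using that $E\boxtimes\mathcal{O}_{\PP}(-k-1)$ has no cohomology. Your Serre-duality justification of the map $\psi$ just spells out the identification the paper asserts, so there is no substantive difference.
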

\begin{proof}
We have the short exact sequence
\begin{align*}
0 &\to \rm{Sym}^{k-1} q^*q_*p^*E \otimes q^*\mathcal{O}(-k-1) \otimes p^*L \otimes I_{\mathcal{Z}} \to \rm{Sym}^{k}q^*q_*p^*E \otimes q^*\mathcal{O}(-k) \otimes p^*L \otimes I_{\mathcal{Z}} \\
&\to \rm{Sym}^{k}\mathcal{G} \otimes p^*L \otimes I_{\mathcal{Z}}\to 0,
\end{align*}
as well as the exact sequence
$0 \to \mathcal{O}_X \boxtimes \mathcal{O}_{\PP}(-2) \to E \boxtimes \mathcal{O}_{\PP}(-1) \to p^*L \otimes I_{\mathcal{Z}} \to 0.$\\


By the K\"unneth formula, $\rm{H}^{k+2}(\mathcal{O}_X \boxtimes \mathcal{O}_{\PP}(-k-3))=0$. We have
$$\rm{H}^{k+1}(\mathcal{O}_X \boxtimes \mathcal{O}_{\PP}(-k-3))=\rm{H}^{k+1}(K_{\PP}(-1)) \simeq \rm{H}^0(\mathcal{O}_{\PP}(1))^{\vee}\simeq \rm{H}^0(E).$$
Further, $\rm{H}^{k+1}(E \boxtimes \mathcal{O}_{\PP}(k-2)) \simeq \rm{H}^{k+1}(E \boxtimes K_{\PP}) \simeq \rm{H}^0(E).$
The map $$\rm{H}^{k+1}(\mathcal{O}_X \boxtimes \mathcal{O}_{\PP}(-k-3)) \to \rm{H}^{k+1}(E \boxtimes \mathcal{O}_{\PP}(-k-2)) $$ is identified with $\rm{id}: \rm{H}^0(E) \to \rm{H}^0(E)$. Thus $\rm{H}^{k+1}(L \boxtimes \mathcal{O}_{\PP}(-k-1)\otimes I_{\mathcal{Z}})=0.$ We likewise have $\rm{H}^{k}(L \boxtimes \mathcal{O}_{\PP}(-k-1)\otimes I_{\mathcal{Z}})=0.$ \medskip


Using that $q_*p^*E \simeq \rm{H}^0(E) \otimes \mathcal{O}_{\PP}$ is trivial, we have
\begin{align*}
\rm{H}^k(\rm{Sym}^{k} \mathcal{G} \otimes p^*L \otimes I_{\mathcal{Z}}) &\simeq \rm{H}^k(\rm{Sym}^k q^*q_*p^*E \otimes q^*\mathcal{O}(-k)\otimes p^*L \otimes I_{\mathcal{Z}}) \\
&\simeq \rm{Sym}^k\rm{H}^0(E)\otimes \rm{H}^k(L \boxtimes \mathcal{O}_{\PP}(-k)\otimes I_{\mathcal{Z}}).
\end{align*}
To finish the proof, it suffices to show that the boundary map
$$\rm{H}^k(L \boxtimes \mathcal{O}_{\PP}(-k)\otimes I_{\mathcal{Z}}) \to \rm{H}^{k+1}(q^*K_{\PP})$$
is an isomorphism, which follows from the fact that $\rm{H}^i(E \boxtimes \mathcal{O}(-k-1))=0$ for all $i$.
\end{proof}

We now repeat the previous lemma, twisting instead by $\mathcal{G}:=q^*q_*(p^*L \otimes I_{\mathcal{Z}})$.
\begin{lem} \label{second-twist-lem}
The evaluation morphism $\mathcal{G} \twoheadrightarrow p^*L \otimes I_{\mathcal{Z}}$ induces an isomorphism
$$\rm{H}^k(\rm{Sym}^{k} \mathcal{G} \otimes \mathcal{G}) \xrightarrow{\sim} \rm{H}^k(\rm{Sym}^{k} \mathcal{G} \otimes p^*L \otimes I_{\mathcal{Z}}) .$$
\end{lem}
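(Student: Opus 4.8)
The idea is to put the map of the Lemma into the long exact cohomology sequence attached to the kernel of the evaluation morphism, and then to check that this kernel has no cohomology in the degrees that matter. Set $\mathcal{K}:=\ker\!\big(\mathcal{G}\twoheadrightarrow p^*L\otimes I_{\mathcal{Z}}\big)$, so $0\to\mathcal{K}\to\mathcal{G}\to p^*L\otimes I_{\mathcal{Z}}\to 0$ is exact. Since $\rm{Sym}^k\mathcal{G}$ is locally free, tensoring this sequence by $\rm{Sym}^k\mathcal{G}$ preserves exactness, and the map of the Lemma is precisely the map $\rm{H}^k(\rm{Sym}^k\mathcal{G}\otimes\mathcal{G})\to\rm{H}^k(\rm{Sym}^k\mathcal{G}\otimes p^*L\otimes I_{\mathcal{Z}})$ occurring in the associated long exact sequence; hence it is an isomorphism as soon as
$$\rm{H}^k(\rm{Sym}^k\mathcal{G}\otimes\mathcal{K})=\rm{H}^{k+1}(\rm{Sym}^k\mathcal{G}\otimes\mathcal{K})=0.$$
So the whole statement reduces to these two vanishings.

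\textbf{Identifying $\mathcal{K}$.} This is the heart of the argument. Let $M_E:=\ker\!\big(\rm{H}^0(E)\otimes\mathcal{O}_X\to E\big)$ be the kernel bundle of $E$, which is locally free of rank $k$ as $E$ is globally generated, and recall that $\mathcal{G}=q^*\mathcal{V}$ for the vector bundle $\mathcal{V}:=q_*(p^*L\otimes I_{\mathcal{Z}})$, so that $\rm{Sym}^k\mathcal{G}=q^*\rm{Sym}^k\mathcal{V}$. I claim
$$\mathcal{K}\simeq p^*M_E\otimes q^*\mathcal{O}_{\PP}(-1)=M_E\boxtimes\mathcal{O}_{\PP}(-1).$$
To prove this, apply the counit $q^*q_*\Rightarrow\rm{id}$ to the fundamental sequence $0\to\mathcal{O}_X\boxtimes\mathcal{O}_{\PP}(-2)\to E\boxtimes\mathcal{O}_{\PP}(-1)\to p^*L\otimes I_{\mathcal{Z}}\to 0$. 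Using $q_*(\mathcal{O}_X\boxtimes\mathcal{O}_{\PP}(-2))=\mathcal{O}_{\PP}(-2)$, $q_*(E\boxtimes\mathcal{O}_{\PP}(-1))=\rm{H}^0(E)\otimes\mathcal{O}_{\PP}(-1)$ and $\rm{R}^1q_*(\mathcal{O}_X\boxtimes\mathcal{O}_{\PP}(-2))=0$ (as $\rm{H}^1(\mathcal{O}_X)=0$), this produces a morphism of short exact sequences from $0\to\mathcal{O}_X\boxtimes\mathcal{O}_{\PP}(-2)\to\rm{H}^0(E)\otimes\big(\mathcal{O}_X\boxtimes\mathcal{O}_{\PP}(-1)\big)\to\mathcal{G}\to 0$ to the fundamental sequence, whose three vertical arrows are, from left to right: the identity of $\mathcal{O}_X\boxtimes\mathcal{O}_{\PP}(-2)$; the map induced by the evaluation $\rm{H}^0(E)\otimes\mathcal{O}_X\to E$, which has kernel $M_E\boxtimes\mathcal{O}_{\PP}(-1)$; and the evaluation morphism $\mathcal{G}\to p^*L\otimes I_{\mathcal{Z}}$, which has kernel $\mathcal{K}$. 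Since the leftmost arrow is an isomorphism, the snake lemma gives $\mathcal{K}\simeq M_E\boxtimes\mathcal{O}_{\PP}(-1)$.

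\textbf{The vanishings.} Granting the identification, these become a routine Künneth computation. We have $\rm{Sym}^k\mathcal{G}\otimes\mathcal{K}\simeq M_E\boxtimes\big(\rm{Sym}^k\mathcal{V}\otimes\mathcal{O}_{\PP}(-1)\big)$, so it suffices to check that $\rm{H}^\bullet(X,M_E)$ is concentrated in degree $2$ and $\rm{H}^\bullet(\PP,\rm{Sym}^k\mathcal{V}(-1))$ is concentrated in degree $k$; then $\rm{H}^\bullet(\rm{Sym}^k\mathcal{G}\otimes\mathcal{K})$ lives only in degree $k+2$, and in particular its $\rm{H}^k$ and $\rm{H}^{k+1}$ vanish. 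For the first: in the long exact sequence of $0\to M_E\to\rm{H}^0(E)\otimes\mathcal{O}_X\to E\to 0$ the natural map $\rm{H}^0(E)\otimes\rm{H}^0(\mathcal{O}_X)\to\rm{H}^0(E)$ is an isomorphism, so $\rm{H}^0(M_E)=0$; then $\rm{H}^1(M_E)=0$ because $\rm{H}^1(\mathcal{O}_X)=0$, while $\rm{H}^{\geq 3}(M_E)=0$ as $X$ is a surface. For the second: applying $\rm{Sym}$ to $0\to\mathcal{O}_{\PP}(-2)\to\rm{H}^0(E)\otimes\mathcal{O}_{\PP}(-1)\to\mathcal{V}\to 0$ and twisting by $\mathcal{O}_{\PP}(-1)$ yields the short exact sequence
$$0\to\rm{Sym}^{k-1}\rm{H}^0(E)\otimes\mathcal{O}_{\PP}(-k-2)\to\rm{Sym}^{k}\rm{H}^0(E)\otimes\mathcal{O}_{\PP}(-k-1)\to\rm{Sym}^k\mathcal{V}(-1)\to 0$$
on $\PP\simeq\PP^{k+1}$; since $\rm{H}^\bullet(\PP,\mathcal{O}_{\PP}(-k-1))=0$ and $\rm{H}^\bullet(\PP,\mathcal{O}_{\PP}(-k-2))$ is concentrated in degree $k+1$, the long exact sequence places $\rm{H}^\bullet(\PP,\rm{Sym}^k\mathcal{V}(-1))$ in degree $k$.

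\textbf{Main obstacle.} I expect the identification $\mathcal{K}\simeq M_E\boxtimes\mathcal{O}_{\PP}(-1)$ to be the only genuinely delicate step, and within it the crux is that the left-hand columns of the two fundamental sequences truly agree under the counit — which is exactly where the hypothesis $\rm{Pic}(X)=\mathbb{Z}[L]$ is used, in the spirit of the Remark following the definition of $\mathcal{Z}$. Once $\mathcal{K}$ has been pinned down, everything else is bookkeeping with the Künneth formula and the cohomology of line bundles on $\PP^{k+1}$.
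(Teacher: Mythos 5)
Your argument is correct, and it takes a genuinely different route from the paper's. The paper proves the lemma by resolving the $\mathrm{Sym}^k\mathcal{G}$ factor: it tensors the symmetric-power sequence coming from $0 \to q^*\mathcal{O}(-2) \to q^*q_*p^*E \otimes q^*\mathcal{O}(-1) \to \mathcal{G} \to 0$ with $\mathcal{G}$, computes $\mathrm{H}^k(\mathrm{Sym}^k\mathcal{G}\otimes\mathcal{G}) \simeq \mathrm{Sym}^k\mathrm{H}^0(E)$ in parallel with Lemma \ref{first-twist-lem}, and then observes that the evaluation map intertwines the two chains of identifications (both sides being detected by boundary maps onto $\mathrm{H}^{k+1}(q^*K_{\PP})$). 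You instead resolve the other factor: you identify the kernel $\mathcal{K}$ of the evaluation morphism as $M_E \boxtimes \mathcal{O}_{\PP}(-1)$ via the counit and the snake lemma applied to the two fundamental sequences, and then kill $\mathrm{H}^k$ and $\mathrm{H}^{k+1}$ of $\mathrm{Sym}^k\mathcal{G}\otimes\mathcal{K} \simeq M_E \boxtimes \bigl(\mathrm{Sym}^k\mathcal{V}\otimes\mathcal{O}_{\PP}(-1)\bigr)$ by K\"unneth, using $\mathrm{H}^0(M_E)=\mathrm{H}^1(M_E)=0$ and the concentration of $\mathrm{H}^\bullet(\PP,\mathrm{Sym}^k\mathcal{V}(-1))$ in degree $k$; all these steps check out. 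What your approach buys is that it proves the induced map is an isomorphism directly, with no need to track compatibility of separate identifications of the two sides (the terse final sentence of the paper's proof); what it gives up is the explicit value $\mathrm{Sym}^k\mathrm{H}^0(E)$ of the target, but the paper only needs that value from Lemmas \ref{first-iso} and \ref{first-twist-lem}, so your lemma is a valid drop-in replacement. One small correction to your closing commentary: the agreement of the left-hand columns uses only $\mathrm{H}^0(\mathcal{O}_X)=\C$ and $\mathrm{H}^1(\mathcal{O}_X)=0$, not the Picard hypothesis; $\mathrm{Pic}(X)=\ZZ[L]$ enters earlier, in guaranteeing that every section of $E$ vanishes in dimension zero, i.e.\ in the exactness of the fundamental sequence on $X\times\PP$ (and the finiteness of $\mathcal{Z}\to\PP$), which both you and the paper take as already established.
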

\begin{proof}
We have the short exact sequences
\begin{align*}
0 \to \rm{Sym}^{k-1} q^*q_*p^*E \otimes q^*\mathcal{O}(-k-1) \otimes \mathcal{G} \to \rm{Sym}^{k}q^*q_*p^*E \otimes q^*\mathcal{O}(-k) \otimes \mathcal{G} \to \rm{Sym}^{k}\mathcal{G} \otimes \mathcal{G} \to 0,
\end{align*}
and $0 \to q^*\mathcal{O}(-2) \to q^*q_*p^*E \otimes q^*\mathcal{O}(-1) \to \mathcal{G} \to 0.$\medskip

Using the second sequence, $\rm{H}^{k+1}(\mathcal{G} \otimes \mathcal{O}_{\PP}(-k-1))=\rm{H}^{k}(\mathcal{G} \otimes \mathcal{O}_{\PP}(-k-1))=0$ and $\rm{H}^k(\mathcal{G} \otimes q^*\mathcal{O}(-k))\xrightarrow{\sim} \rm{H}^{k+1}(q^*K_{\PP})$. Thus
\begin{align*}
\rm{H}^k(\rm{Sym}^{k} \mathcal{G} \otimes \mathcal{G}) \simeq \rm{H}^k(\rm{Sym}^k q^*q_*p^*E \otimes q^*\mathcal{O}(-k)\otimes \mathcal{G}) \simeq \rm{Sym}^k\rm{H}^0(E)
\end{align*}
and the evaluation map gives an isomorphism $\rm{H}^k(\rm{Sym}^{k}\mathcal{G} \otimes \mathcal{G}) \xrightarrow{\sim} \rm{H}^k(\rm{Sym}^{k} \mathcal{G} \otimes p^*L \otimes I_{\mathcal{Z}}).$

\end{proof}
As a corollary, we now deduce:
\begin{prop} \label{auto-injectivity}
The natural map gives an isomorphism $$\rm{H}^k(\rm{Sym}^{k+1} \mathcal{G}) \xrightarrow{\sim} \rm{H}^k(\rm{Sym}^{k} \mathcal{G} \otimes p^*L \otimes I_{\mathcal{Z}}).$$
\end{prop}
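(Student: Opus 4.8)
The plan is to deduce Proposition \ref{auto-injectivity} as an immediate corollary of Lemmas \ref{first-iso}, \ref{first-twist-lem} and \ref{second-twist-lem}, by exploiting the multiplicative structure of the symmetric algebra. Concretely, the evaluation map $\mathcal{G} \twoheadrightarrow p^*L \otimes I_{\mathcal{Z}}$ gives rise to two different quotients of $\rm{Sym}^{k}\mathcal{G} \otimes \mathcal{G}$: applying the surjection in the second factor produces $\rm{Sym}^{k}\mathcal{G} \otimes p^*L \otimes I_{\mathcal{Z}}$, while the natural multiplication $\rm{Sym}^k \mathcal{G} \otimes \mathcal{G} \to \rm{Sym}^{k+1}\mathcal{G}$ produces $\rm{Sym}^{k+1}\mathcal{G}$. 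I would first check that the composite $\rm{Sym}^{k+1}\mathcal{G} \xrightarrow{\sim} \rm{H}^k(\rm{Sym}^k \mathcal{G} \otimes p^*L \otimes I_{\mathcal{Z}})$ claimed in the proposition is precisely the map obtained by running Lemma \ref{second-twist-lem} backwards through the multiplication map; that is, I want the square
\begin{align*}
\rm{H}^k(\rm{Sym}^{k}\mathcal{G} \otimes \mathcal{G}) &\longrightarrow \rm{H}^k(\rm{Sym}^{k+1}\mathcal{G})\\
\downarrow \quad\quad\quad\quad\quad &\quad\quad\quad\quad \downarrow\\
\rm{H}^k(\rm{Sym}^{k}\mathcal{G} \otimes p^*L \otimes I_{\mathcal{Z}}) &= \rm{H}^k(\rm{Sym}^{k}\mathcal{G} \otimes p^*L \otimes I_{\mathcal{Z}})
\end{align*}
to commute, where the left vertical arrow is the isomorphism of Lemma \ref{second-twist-lem} and the bottom is the identity.

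Given this, the argument is a diagram chase. By Lemma \ref{second-twist-lem} the left vertical arrow is an isomorphism, so the top horizontal arrow $\rm{H}^k(\rm{Sym}^{k}\mathcal{G} \otimes \mathcal{G}) \to \rm{H}^k(\rm{Sym}^{k+1}\mathcal{G})$ and the map $\rm{H}^k(\rm{Sym}^{k+1}\mathcal{G}) \to \rm{H}^k(\rm{Sym}^{k}\mathcal{G} \otimes p^*L \otimes I_{\mathcal{Z}})$ of the proposition are forced to be mutually inverse once I know that all three groups have the same dimension. But Lemma \ref{first-iso} gives $\rm{H}^k(\rm{Sym}^{k+1}\mathcal{G}) \simeq \rm{Sym}^k \rm{H}^0(E)$, Lemma \ref{first-twist-lem} gives $\rm{H}^k(\rm{Sym}^{k}\mathcal{G} \otimes p^*L \otimes I_{\mathcal{Z}}) \simeq \rm{Sym}^k \rm{H}^0(E)$, and (within the proof of Lemma \ref{second-twist-lem}) $\rm{H}^k(\rm{Sym}^{k}\mathcal{G} \otimes \mathcal{G}) \simeq \rm{Sym}^k \rm{H}^0(E)$ as well. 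So all three are $\rm{Sym}^k \rm{H}^0(E)$, finite-dimensional of the same rank, and it suffices to show the composite map is surjective (or injective). Surjectivity of $\rm{H}^k(\rm{Sym}^{k}\mathcal{G} \otimes \mathcal{G}) \to \rm{H}^k(\rm{Sym}^{k+1}\mathcal{G})$ follows by tracking through the $q^*q_*p^*E$-resolutions used in Lemmas \ref{first-iso} and \ref{second-twist-lem}: both $\rm{H}^k$ groups are computed as $\rm{Sym}^k \rm{H}^0(E) \otimes \rm{H}^{k+1}(q^*K_{\PP})$ via the same boundary isomorphism, and the multiplication map is compatible with these identifications, hence is the identity on $\rm{Sym}^k\rm{H}^0(E)$.

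The main obstacle is the bookkeeping of naturality: I must make sure the isomorphisms produced in the three lemmas are genuinely induced by the maps appearing in the statement of Proposition \ref{auto-injectivity}, rather than being merely abstract dimension counts. The cleanest way to organize this is to observe that all the relevant sheaves are pulled back from, or twisted by line bundles pulled back from, $\PP$, so every cohomology group in sight decomposes by K\"unneth as $\rm{Sym}^j \rm{H}^0(E) \otimes \rm{H}^{\bullet}(\PP, \mathcal{O}_{\PP}(\ast))$ with the only nonzero $\PP$-factor being $\rm{H}^{k+1}(K_{\PP}) \simeq \C$; the maps in question then become $\rm{Sym}^{\bullet}$ of the structural maps of $\rm{H}^0(E)$ tensored with the canonical generator, and commutativity of the square above is automatic. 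I would then state the proposition's isomorphism as the composite
$$\rm{H}^k(\rm{Sym}^{k+1}\mathcal{G}) \xleftarrow{\ \sim\ } \rm{H}^k(\rm{Sym}^{k}\mathcal{G}\otimes \mathcal{G}) \xrightarrow[\sim]{\text{Lem.\ \ref{second-twist-lem}}} \rm{H}^k(\rm{Sym}^{k}\mathcal{G}\otimes p^*L\otimes I_{\mathcal{Z}}),$$
the first arrow being the multiplication map, which the computation shows to be an isomorphism. This finishes the proof.
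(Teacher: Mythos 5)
Your reduction to Lemmas \ref{first-iso}, \ref{first-twist-lem} and \ref{second-twist-lem} is the right frame, but the diagram chase has a gap at exactly the point the paper's proof is built to handle. The natural map in the proposition is the comultiplication $\mathrm{Sym}^{k+1}\mathcal{G}\to\mathrm{Sym}^{k}\mathcal{G}\otimes\mathcal{G}$ followed by evaluation in the last factor. Your square, with the multiplication map $m$ on top and the identity on the bottom, does not commute: at the sheaf level the composite $\mathrm{Sym}^{k}\mathcal{G}\otimes\mathcal{G}\to\mathrm{Sym}^{k+1}\mathcal{G}\to\mathrm{Sym}^{k}\mathcal{G}\otimes\mathcal{G}$ (multiplication followed by comultiplication) is not the identity, nor a scalar multiple of it, so its commutativity is not ``automatic'' from the K\"unneth decomposition. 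Consequently, knowing that $m$ and the evaluation map both induce isomorphisms on $\mathrm{H}^k$, together with equality of dimensions, does not force the map in the statement to be an isomorphism: a priori the comultiplication could induce any map on $\mathrm{H}^k$, including zero. In addition, your justification that $m_*$ is an isomorphism (``compatible with the identifications, hence the identity on $\mathrm{Sym}^k\mathrm{H}^0(E)$'') is asserted rather than verified; lifting $m$ to the two $q^*q_*p^*E$-resolutions and comparing boundary maps is a genuine computation, whose outcome is at best a nonzero scalar multiple of the identity. That computation is precisely the ``nonsingular matrix'' the paper's argument is designed to avoid.

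The missing ingredient is the paper's one-line observation: for a vector bundle $\mathcal{F}$ in characteristic $0$, the composite $\mathrm{Sym}^{i}\mathcal{F}\to\mathrm{Sym}^{i-1}\mathcal{F}\otimes\mathcal{F}\to\mathrm{Sym}^{i}\mathcal{F}$ is multiplication by $i$. Hence the comultiplication is split injective as a map of sheaves, so $\mathrm{H}^k(\mathrm{Sym}^{k+1}\mathcal{G})\to\mathrm{H}^k(\mathrm{Sym}^{k}\mathcal{G}\otimes\mathcal{G})$ is injective for free; composing with the isomorphism of Lemma \ref{second-twist-lem} and comparing dimensions via Lemmas \ref{first-iso} and \ref{first-twist-lem} (both sides are $\mathrm{Sym}^k\mathrm{H}^0(E)$) finishes the proof. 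The same identity also rescues your route if you prefer it: $m_*\circ\Delta_*=(k+1)\mathrm{id}$ shows $m_*$ is surjective, hence an isomorphism by the dimension count, and then $\Delta_*=(k+1)m_*^{-1}$, so your composite agrees with the proposition's natural map up to the nonzero factor $k+1$. Either way, some form of the split identity has to be invoked; without it your chase does not close.
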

\begin{proof}
By the previous lemmas, it suffices to show that the natural morphism
$$\rm{H}^k(\rm{Sym}^{k+1} \mathcal{G}) \to \rm{H}^k(\rm{Sym}^{k} \mathcal{G} \otimes \mathcal{G})$$
is injective. For a vector bundle $\mathcal{F}$ the composition $\rm{Sym}^i \mathcal{F} \to \rm{Sym}^{i-1}\mathcal{F} \otimes \mathcal{F} \to \rm{Sym}^i \mathcal{F}$
of natural maps is just multiplication by $i$. This completes the proof.
\end{proof}
We now complete the proof that $K_{k,1}(X,L)=0$.
\begin{thm} \label{main-thm}
We have $\rm{H}^i(B, \bigwedge^{k+1-i} \pi^* \mathcal{M} \otimes \rm{Sym}^i \mathcal{S})=0$ for $1 \leq i \leq k+1$.
\end{thm}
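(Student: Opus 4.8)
The plan is to establish the vanishing separately for $1\le i\le k$ and for $i=k+1$, in both cases starting from the second Weyman sequence for the defining sequence $0\to\mathcal{S}\to\pi^*\mathcal{G}\to {p'}^*L\otimes I_D\to 0$ of $\mathcal{S}$; here I use the identification ${q'}^*q'_*({p'}^*L\otimes I_D)\simeq\pi^*q^*q_*(p^*L\otimes I_{\mathcal{Z}})=\pi^*\mathcal{G}$, the middle step being the canonical isomorphism recorded above. Since $D$ is a divisor, ${p'}^*L\otimes I_D$ is a line bundle, so the Weyman sequence truncates to
$$0\to\rm{Sym}^i\mathcal{S}\to\rm{Sym}^i\pi^*\mathcal{G}\to\rm{Sym}^{i-1}\pi^*\mathcal{G}\otimes{p'}^*L\otimes I_D\to 0.$$
I would tensor this with the locally free sheaf $\bigwedge^{k+1-i}\pi^*\mathcal{M}\simeq\pi^*\bigl(\bigwedge^{k+1-i}\mathcal{M}\bigr)$, which preserves exactness, and then compute the cohomology of the two right-hand terms via $\rm{R}\pi_*$, using $\pi_*\mathcal{O}_B\simeq\mathcal{O}_{X\times\PP}$, $\pi_*I_D\simeq I_{\mathcal{Z}}$, the vanishing of the higher direct images, and the projection formula. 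The resulting long exact sequence sandwiches $\rm{H}^i(B,\bigwedge^{k+1-i}\pi^*\mathcal{M}\otimes\rm{Sym}^i\mathcal{S})$ between $\rm{H}^{i-1}(X\times\PP,\bigwedge^{k+1-i}\mathcal{M}\otimes\rm{Sym}^{i-1}\mathcal{G}\otimes p^*L\otimes I_{\mathcal{Z}})$ and $\rm{H}^i(X\times\PP,\bigwedge^{k+1-i}\mathcal{M}\otimes\rm{Sym}^i\mathcal{G})$. Thus for $1\le i\le k$ it suffices to show that both of these groups on $X\times\PP$ vanish, while for $i=k+1$ (so $\bigwedge^0\pi^*\mathcal{M}=\mathcal{O}_B$) it suffices, granting $\rm{H}^{k+1}(X\times\PP,\rm{Sym}^{k+1}\mathcal{G})=0$ from Lemma \ref{first-iso}, to show that the map
$$\alpha\colon\ \rm{H}^k(X\times\PP,\rm{Sym}^{k+1}\mathcal{G})\longrightarrow\rm{H}^k(X\times\PP,\rm{Sym}^k\mathcal{G}\otimes p^*L\otimes I_{\mathcal{Z}})$$
underlying the Weyman differential is surjective.

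For the range $1\le i\le k$ I would resolve each $\rm{Sym}^j\mathcal{G}$ by the sequence $0\to\rm{Sym}^{j-1}\rm{H}^0(E)\otimes q^*\mathcal{O}(-j-1)\to\rm{Sym}^j\rm{H}^0(E)\otimes q^*\mathcal{O}(-j)\to\rm{Sym}^j\mathcal{G}\to 0$ of Lemma \ref{first-iso}, and resolve $p^*L\otimes I_{\mathcal{Z}}$ by $0\to\mathcal{O}_X\boxtimes\mathcal{O}_{\PP}(-2)\to E\boxtimes\mathcal{O}_{\PP}(-1)\to p^*L\otimes I_{\mathcal{Z}}\to 0$. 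After tensoring with $\bigwedge^{k+1-i}\mathcal{M}=\bigwedge^{k+1-i}M_L\boxtimes\mathcal{O}_{\PP}$ and applying the K\"unneth formula, every term that appears has the shape $\mathcal{F}\boxtimes\mathcal{O}_{\PP}(-m)$ with $\mathcal{F}$ one of $\bigwedge^{a}M_L$ or $\bigwedge^{a}M_L\otimes E$, and a short count of the twists gives $1\le m\le k+1$ for the first of the two groups above and $1\le m\le k+2$ for the second. Since $\rm{H}^d(\PP^{k+1},\mathcal{O}(-m))=0$ for every $d$ whenever $1\le m\le k+1$, the first group vanishes at once, and so does the second unless some term with $m=k+2$ occurs; inspection of the degrees shows this happens only when $i=k$, and then only through the single summand $M_L\boxtimes\mathcal{O}_{\PP}(-k-2)$. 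As $\rm{H}^0(X,M_L)=\rm{H}^1(X,M_L)=0$ (immediate from $0\to M_L\to\rm{H}^0(L)\otimes\mathcal{O}_X\to L\to 0$ together with $\rm{H}^1(\mathcal{O}_X)=0$), this summand has cohomology only in degrees $\ge k+3$; tracing it back through the two short exact sequences shows that $\bigwedge^1\mathcal{M}\otimes\rm{Sym}^{k-1}\mathcal{G}\otimes p^*L\otimes I_{\mathcal{Z}}$ has cohomology only in degrees $\ge k+1$, and since $i-1=k-1<k+1$ the required vanishing holds. This settles $1\le i\le k$.

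It remains to prove $\alpha$ is surjective, which then yields $\rm{H}^{k+1}(B,\rm{Sym}^{k+1}\mathcal{S})=0$. The differential $\rm{Sym}^{k+1}\mathcal{G}\to\rm{Sym}^k\mathcal{G}\otimes(p^*L\otimes I_{\mathcal{Z}})$ underlying $\alpha$ is the comultiplication $\rm{Sym}^{k+1}\mathcal{G}\to\rm{Sym}^k\mathcal{G}\otimes\mathcal{G}$ followed by $\rm{id}$ tensored with the evaluation map $\mathcal{G}=q^*q_*(p^*L\otimes I_{\mathcal{Z}})\to p^*L\otimes I_{\mathcal{Z}}$; under the identifications above (together with the routine check, using $\pi_*\mathcal{O}_B\simeq\mathcal{O}_{X\times\PP}$ and $\pi_*I_D\simeq I_{\mathcal{Z}}$, that the quotient $\pi^*\mathcal{G}\to{p'}^*L\otimes I_D$ pushes down to this evaluation map) $\alpha$ is precisely the natural map shown to be an isomorphism in Proposition \ref{auto-injectivity}. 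Hence $\alpha$ is surjective, the connecting homomorphism $\rm{H}^k(X\times\PP,\rm{Sym}^k\mathcal{G}\otimes p^*L\otimes I_{\mathcal{Z}})\to\rm{H}^{k+1}(B,\rm{Sym}^{k+1}\mathcal{S})$ vanishes, and combined with $\rm{H}^{k+1}(X\times\PP,\rm{Sym}^{k+1}\mathcal{G})=0$ this gives $\rm{H}^{k+1}(B,\rm{Sym}^{k+1}\mathcal{S})=0$.

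The step I expect to be the real obstacle is this last one: everything funnels into the vanishing of that connecting homomorphism, equivalently into the surjectivity of the natural map $\rm{H}^k(\rm{Sym}^{k+1}\mathcal{G})\to\rm{H}^k(\rm{Sym}^k\mathcal{G}\otimes p^*L\otimes I_{\mathcal{Z}})$, and this is exactly where the \emph{automatic} nonsingularity of $\rm{Sym}$-type maps (Proposition \ref{auto-injectivity}, resting on Lemmas \ref{first-iso}, \ref{first-twist-lem} and \ref{second-twist-lem}) carries the argument with no genuine computation. The only other place needing care is the case $i=k$ of the second vanishing in the range $1\le i\le k$, where the higher cohomology of the kernel bundle $M_L$ on the K3 surface is nonzero but sits in cohomological degree too high to obstruct $\rm{H}^{k-1}$.
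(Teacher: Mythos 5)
Your proposal is correct and follows essentially the same route as the paper: the short exact sequence $0\to\rm{Sym}^i\mathcal{S}\to\rm{Sym}^i\pi^*\mathcal{G}\to\rm{Sym}^{i-1}\pi^*\mathcal{G}\otimes{p'}^*L\otimes I_D\to 0$, pushforward to $X\times\PP$, the two resolutions with twist bookkeeping on $\PP^{k+1}$ for $1\le i\le k$, and Proposition \ref{auto-injectivity} plus Lemma \ref{first-iso} to kill $\rm{H}^{k+1}(B,\rm{Sym}^{k+1}\mathcal{S})$. One cosmetic slip: the twist $m=k+2$ occurs in the group $\rm{H}^{i-1}(\bigwedge^{k+1-i}\mathcal{M}\otimes\rm{Sym}^{i-1}\mathcal{G}\otimes p^*L\otimes I_{\mathcal{Z}})$ (your ``first'' group), not the plain $\rm{Sym}^i\mathcal{G}$ one, but your final handling via $M_L\boxtimes\mathcal{O}_{\PP}(-k-2)$ and $\rm{H}^0(X,M_L)=0$ addresses exactly the right term, matching the paper.
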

\begin{proof}
Observe $\pi^*\mathcal{G}\simeq {q'}^*q'_*({p'}^*L \otimes I_{D})$. From the defining sequence for $\mathcal{S}$, we have an exact sequence
\begin{align} \label{ses-S}
0 \to \rm{Sym}^{i}\mathcal{S} \to \rm{Sym}^{i} \pi^* \mathcal{G} \to \rm{Sym}^{i-1} \pi^*\mathcal{G} \otimes {p'}^*L \otimes I_{D} \to 0.
\end{align}
Using the projection formula, and recalling the identities $\pi_*\mathcal{O}_B \simeq \mathcal{O}_{X \times \PP}$, $\pi_* I_{D} \simeq I_{\mathcal{Z}}$ and $\rm{R}^j \pi_*\mathcal{O}_B=\rm{R}^j\pi_* I_D=0$ for $j>0$,  we may identify
$$\rm{H}^{\ell}(B, \rm{Sym}^{i} \pi^* \mathcal{G}) \to \rm{H}^{\ell}(B,\rm{Sym}^{i-1} \pi^*\mathcal{G} \otimes {p'}^*L \otimes I_{D})$$
with the natural map $\rm{H}^{\ell}(X \times \PP, \rm{Sym}^{i} \mathcal{G}) \to \rm{H}^{\ell}(X \times \PP,\rm{Sym}^{i-1} \mathcal{G} \otimes {p}^*L \otimes I_{\mathcal{Z}})$, for any $\ell$. Taking the long exact sequence of cohomology for the sequence (\ref{ses-S}) for $i=k+1$ and applying the previous lemmas, we immediately see $\displaystyle \rm{H}^{k+1}(B,\rm{Sym}^{k+1} \mathcal{S})=0.$ \\

To complete the proof, it suffices to show
\begin{align*}
\rm{H}^i (\bigwedge^{k+1-i} \pi^* \mathcal{M} \otimes \rm{Sym}^{i} \pi^* \mathcal{G})=\rm{H}^{i-1}(\bigwedge^{k+1-i} \pi^* \mathcal{M} \otimes \rm{Sym}^{i-1} \pi^* \mathcal{G} \otimes {p'}^*L \otimes I_{D} )=0, \; \; \text{for $1 \leq i \leq k$}.
\end{align*}
The first vanishing follows from the exact sequence 
$$0 \to \rm{Sym}^{i-1} q^*q_*p^*E \otimes q^*\mathcal{O}(-i-1) \to \rm{Sym}^{i}q^*q_*p^*E \otimes q^*\mathcal{O}(-i) \to \rm{Sym}^{i}\mathcal{G}\to 0,$$
together with $\rm{H}^i(\bigwedge^{k+1-i} M_L \boxtimes \mathcal{O}_{\PP}(-i))=\rm{H}^{i+1}(\bigwedge^{k+1-i}M_L \boxtimes \mathcal{O}_{\PP}(-i-1))=0$ for $1 \leq i \leq k$.\smallskip

Next, from the exact sequence
\begin{align*}
0 &\to \rm{Sym}^{i-2} q^*q_*p^*E \otimes q^*\mathcal{O}(-i) \otimes p^*L \otimes I_{\mathcal{Z}} \to \rm{Sym}^{i-1}q^*q_*p^*E \otimes q^*\mathcal{O}(-i+1) \otimes p^*L \otimes I_{\mathcal{Z}} \\
&\to \rm{Sym}^{i-1}\mathcal{G} \otimes p^*L \otimes I_{\mathcal{Z}}\to 0,
\end{align*}
it suffices to show
$$\rm{H}^{i-1}(\bigwedge^{k+1-i} M_L (L)\boxtimes \mathcal{O}(-i+1)\otimes I_{\mathcal{Z}})=\rm{H}^{i}(\bigwedge^{k+1-i} M_L(L) \boxtimes \mathcal{O}(-i) \otimes I_{\mathcal{Z}})=0, \; \; 1 \leq i \leq k.$$ This follows from $0 \to \mathcal{O}_X \boxtimes \mathcal{O}_{\PP}(-2) \to E \boxtimes \mathcal{O}_{\PP}(-1) \to L \otimes I_{\mathcal{Z}} \to 0,$ as $\rm{H}^0(X,M_L)=0$ if $i=k$.

\end{proof}
\textbf{Acknowledgements}  I thank G.\ Farkas for numerous discussions on these topics. I thank R.\ Lazarsfeld for encouragement and for detailed comments on a draft. The author is supported by NSF grant DMS-1701245.

\end{document}